\documentclass[12pt]{article}
\usepackage{latexsym}
\usepackage{a4wide}

\newtheorem{theorem}{Theorem}
\newtheorem{lemma}{Lemma}[section]
\newtheorem{corollary}[theorem]{Corollary}

\newenvironment{proof}{\rmfamily\upshape\mdseries\small{\noindent\normalsize\bfseries Proof:}}{\nopagebreak\rule{2mm}{2mm}\newline}

\def\s{\,\,}

\begin{document}
\title{Almost all permutations power to a prime length cycle}
\author{W. R. Unger\thanks{The author acknowledges the support of Australian Research Council grant DP160104626} \\
School of Mathematics and Statistics, \\
University of Sydney, \\
Sydney, Australia}
\date{\today}

\maketitle

\begin{abstract}
We show that almost all permutations have some power that is a cycle of prime
length.
The proof includes a theorem giving a strong upper bound on the proportion of
elements of the symmetric group having no cycles with length in a given set.
\end{abstract}

\section{Introduction}
In this note we prove that almost all permutations
have some power that is a cycle of prime length.
Equivalently, that almost all permutations have a cycle
of prime length, $p$ say, where $p$ does not divide the length of any other
cycle of the permutation.

This question arises from C. Jordan's results of the 1870s, where he showed
that if $G$ is a primitive permutation group of degree $n$, and $G$ contains a
$p$-cycle, where $p\le n-3$ and $p$ is prime, then $G$ contains the alternating
group of degree $n$. Jordan's result has been generalised several times,
see for instance \cite{J14} for a generalisation to non-prime cycles.

When computing with permutation groups such results are used as a test for
a primitive permutation group to contain the alternating group, and so merit
special treatment. It is of interest to know that as the degree grows,
it becomes easier to find such elements 
by random search in the giant permutation groups $A_n$ and $S_n$.

Thee next section gives terminology and notation. Following that we give a
simple but quite strong upper bound on the proportion of permutations in
$S_n$ having no cycle lengths in some fixed set.
The last section proves the result of the title, and some easy corollaries.

\section{Terminology and Notation}
We use $S_n$ to denote the symmetric group on the set $\{1,2\dots,n\}$,
and $A_n$ to denote the alternating subgroup of $S_n$.

When $P$ is some property of permutations we say that $P$ holds for
\emph{almost all permutations} when $\lim_{n\to\infty} p_n = 1$,
where $p_n$ is the proportion of elements of $S_n$ having property $P$.
When considering properties of a permutation $\sigma$, we will use $n$ to
denote the degree of the containing group $S_n$.

When we refer to the cycles of a permutation, we refer to the factors in the
product of disjoint cycles decomposition of the permutation. We will say
that a permutation is a \emph{cycle}
when the cycles of the permutation consist of one cycle of
length greater than 1, and some number (perhaps zero) of fixed points.
When the non-trivial cycle has length $\ell$, the permutation is called an
$\ell$-cycle.

A summation with index $p$ indicates a sum over all primes $p$ in the
given range.

We use the Bachmann-Landau symbols $O()$ and $o()$ (big-O and little-o)
as defined in \cite[\S1.6, p7]{HW}.

All logarithms here are to base $e$.

\section{An upper bound}\label{upper}
In this section we fix a degree $n$ and a set $C$ of possible cycle lengths,
and consider the proportion of elements of $S_n$ having no cycles with length
in $C$. We will prove an upper bound on this proportion.

The parameter $\mu = \sum_{k\in C} 1/k$ is important here.
An upper bound of $1/\mu$ for the proportion was given in \cite[Theorem~VI]{ET},
while \cite[p39]{Mans}, gives an upper bound of $\exp(\gamma-\mu)(1+1/n)$. 
The following result is a very small improvement on this last.
The article referred to in \cite{Mans} was inaccessible to the current author,
which led to the proof below, and the simplicity of the bound is striking.
\begin{theorem}\label{mu_bnd}
Let $n$ be a positive integer, let $C$ be any subset of $\{1,2,\dots,n\}$,
and put $\mu = \sum_{k\in C} 1/k$.
Then the proportion of elements of $S_n$ having no cycle with length in
$C$ is less than $\exp(\gamma-\mu)$, where $\gamma$ is Euler's constant.
\end{theorem}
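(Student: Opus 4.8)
The plan is to use the elementary recursion obtained by conditioning on the cycle through a fixed point. For $0\le m\le n$ let $p_m$ denote the proportion of permutations in $S_m$ having no cycle whose length lies in $C$; this depends only on $C\cap\{1,\dots,m\}$, and $p_n$ is the quantity to be bounded. Classifying a permutation of $\{1,\dots,m\}$ by the length of the cycle containing the point $m$ gives, for $m\ge1$,
\[
  m\,p_m \;=\; \sum_{i=0}^{m-1} p_i \;-\; \sum_{k\in C,\ k\le m} p_{m-k}.
\]
I would first aim to prove, by strong induction on $n$, the sharper estimate
\[
  (n+1)\,p_n \;\le\; E_n, \qquad E_n := \prod_{1\le k\le n,\ k\notin C} e^{1/k} \;=\; \exp\!\left(\sum_{1\le k\le n,\ k\notin C}1/k\right),
\]
noting that $E_m/E_{m-1}=e^{1/m}\ge 1+1/m$ when $m\notin C$, while $E_m=E_{m-1}$ when $m\in C$.

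This estimate implies the theorem. Writing $H_n=\sum_{k=1}^{n}1/k$ and $\mu=\sum_{k\in C}1/k$, we have $\sum_{1\le k\le n,\ k\notin C}1/k=H_n-\mu$, so $(n+1)p_n\le E_n$ reads $p_n\le \frac{e^{H_n}}{n+1}e^{-\mu}$, and it suffices to know $e^{H_n}<e^{\gamma}(n+1)$, i.e.\ $H_n<\gamma+\log(n+1)$. This last inequality is elementary: the sequence $a_n:=\gamma+\log(n+1)-H_n$ satisfies $a_{n+1}-a_n=\log(1+\tfrac{1}{n+1})-\tfrac{1}{n+1}<0$ and $a_n\to 0$ (since $H_n-\log n\to\gamma$), so $a_n>0$ for every $n$. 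Hence $p_n<e^{\gamma-\mu}$, as required.

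The real content is the inductive step. Feeding the hypothesis $p_{n-j}\le E_{n-j}/(n-j+1)$ into the recursion and demanding $\sum_{1\le j\le n,\ j\notin C} E_{n-j}/(n-j+1)\le \frac{n}{n+1}E_n$ would finish it, but this arithmetic inequality is slightly too optimistic (it already fails for $C=\{1,2\}$, $n=4$): the hypothesis is very wasteful at indices $m$ with $p_m=0$, which occur exactly when $m$ is not a sum of allowed lengths in $\{1,\dots,m\}$. So I would instead carry the vanishing of such $p_m$ along in the induction, which deletes precisely the offending summands. The arithmetic then rests on a telescoping idea: when $m\notin C$, $E_m-E_{m-1}=E_m(1-e^{-1/m})\ge \frac{1/m}{1+1/m}E_m=\frac{E_m}{m+1}$, so each surviving term $E_m/(m+1)$ can be charged to an increment $E_m-E_{m-1}$, and these increments telescope to $E_n-E_0=E_n-1$. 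I expect the delicate part to be exactly this charging — matching the index set $\{\,n-j : 1\le j\le n,\ j\notin C\,\}$ against the ``jump set'' $\{\,m\le n : m\notin C\,\}$ so that distinct terms receive distinct increments — together with keeping the representability bookkeeping uniform in $n$. Failing a clean direct argument, plausible fallbacks are to run the induction with the rational surrogate $\prod_{k\le n,\ k\notin C}\frac{k+1}{k}$ in place of $E_n$, or to isolate and prove the (corrected) arithmetic inequality by a second induction on $n$.
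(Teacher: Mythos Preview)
Your recursion $m\,p_m=\sum_{i<m}p_i-\sum_{k\in C,\,k\le m}p_{m-k}$ and your target inequality $(n+1)p_n\le E_n$ are precisely what the paper establishes, but your route to the latter has a genuine gap: the strong induction you set up does not close, you yourself exhibit a counterexample to the arithmetic inequality it requires, and the proposed repair via ``representability bookkeeping'' and a charging/matching between the index set $\{n-j:j\notin C\}$ and the jump set $\{m\le n:m\notin C\}$ is left as a hope rather than an argument. I do not see a clean way to make that matching work; the two sets have the same size but the map $m\mapsto E_m/(m+1)$ is not monotone in any helpful direction, so distinct increments need not dominate the terms you want them to.

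The missing idea is that no induction is needed at all. Fix $n$ once and for all, set $a_k=1$ if $1\le k\le n$ and $k\notin C$, else $a_k=0$, and let $F(z)=\exp\bigl(\sum_{k=1}^n a_kz^k/k\bigr)=\sum_{k\ge0}\tilde p_k z^k$. By the exponential formula, $\tilde p_k$ is the proportion of $S_k$ with every cycle length lying in $\{1,\dots,n\}\setminus C$; in particular $\tilde p_k=p_k$ for $k\le n$ and $\tilde p_k\ge0$ for all $k$. Evaluating at $z=1$ gives $\sum_{k\ge0}\tilde p_k=F(1)=E_n$ exactly, so $\sum_{k=0}^{n-1}p_k\le E_n-p_n$. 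Your own recursion already yields $n\,p_n\le\sum_{k=0}^{n-1}p_k$, whence $(n+1)p_n\le E_n$ in one line. (Equivalently: differentiate $F$ to recover your recursion, then bound the partial sum by $F(1)-p_n$.) The paper does exactly this; the generating function is not decoration but the device that replaces your failed induction by the trivial bound of a partial sum by a convergent nonnegative series.
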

\begin{proof}
Define, for $k = 1,2,\dots n$, $a_k = 0$ for $k\in C$, with $a_k=1$ otherwise.
Set 
$$A(z) = \sum_{k=1}^n \frac{a_k}{k} z^k,\qquad\mbox{and}
\qquad F(z) = \exp(A(z)).$$
This $F(z)$ is an entire function and we take the Taylor series for
$F(z)$ about $0$ to be
$$
F(z) = \sum_{k=0}^\infty p_k z^k.
$$
We can identify $p_k$, for all $k\ge0$, as the proportion of elements of
$S_k$ having all cycles with length $\le n$, and having no cycles with length
in $C$.
This may be seen by \cite[Proposition II.4]{FS09}, or by applying the
methods of \cite[Chapter~3]{Wilf}.
With this notation we are trying to prove that $p_n < \exp(\gamma-\mu)$.

Define $E(n)$, for $n\ge1$, by
$$ \sum_{k=1}^n \frac{1}{k} \s=\s \log n + \gamma + E(n).$$
It is well known that $0 < E(n) < 1/2n$, and so $\exp(E(n))<1+1/n$.
Now
$$
A(1) \s=\s \sum_{k=1}^n \frac{1}{k} - \mu \s=\s \log n + \gamma-\mu + E(n).
$$
Using these relations we get
$$
F(1) \s<\s n\exp(\gamma-\mu)\left(1+\frac{1}{n}\right) \s=\s
(n+1)\exp(\gamma-\mu).
$$
Now consider the derivative of $F(z)$, 
\begin{equation}\label{deriv}
F'(z) \s=\s A'(z)F(z).
\end{equation}
Observe that $A'(z) = \sum_{k=1}^n a_k z^{k-1}$ and equate
the coefficients of $z^{n-1}$ on either side of (\ref{deriv}):
$$
np_n \s=\s \sum_{k=0}^{n-1} a_{n-k}p_k \s\le\s
\sum_{k=0}^{n-1} p_k \s\le\s F(1) - p_n.
$$
It follows that
$$
(n+1)p_n \s\le\s  F(1) \s<\s (n+1)\exp(\gamma-\mu),
$$
which proves the result.
\end{proof}

If $\mu > 0$ then $\exp(\mu-\gamma) > \exp(\mu-1) \ge \mu$, so
$\exp(\gamma-\mu) < 1/\mu$.
This shows that the bound of Theorem \ref{mu_bnd} is always stronger than
the bound of \cite[Theorem~VI]{ET}.

Consider the sequence of examples $C_n = \{1,2,\dots n-1\}$.
The elements of $S_n$ with no cycles having length in $C_n$ are the $n$-cycles.
As $n\to\infty$, the upper bound of Theorem \ref{mu_bnd} is asymptotic
to $1/n$, which is the exact proportion of $n$-cycles in $S_n$.
For these examples the upper bound looks very good.

When $C$ is a very small set, \cite[Theorem B]{Mans} implies that
the proportion of permutations with no cycle lengths in $C$
will be close to $e^{-\mu}$.
The extreme examples here are $C=\{\ell\}$ for some fixed $\ell$.
It is known that the proportion of elements without an
$\ell$-cycle is asymptotic to $e^{-1/\ell}$ as $n\to\infty$,
while our upper bound is $e^{\gamma-1/\ell}$, so the $e^\gamma=1.781\dots$
multiplier looks larger than necessary.
Indeed the upper bound given above is useless (that is $>1$) for $\ell\ge2$.

In the next section we will make use of sets $C$ where 
$n/|C| \approx \log n$ and $e^{-\mu}\approx 2\log\log n/\log n$,
a situation between the extreme examples above.

\section{The Main Theorem}
We will now prove the title statement.
We first show that almost all permutations have a cycle of prime length
where the prime is not too small.
As we are looking at properties of $S_n$ for large $n$, we will assume
that $n > 10$.

We will use an auxiliary function $f(n) = (\log n)^2$.
A different $f$ could be used.
For the proof of the next lemma, $f$ must not grow too quickly as $n\to\infty$
($\log f(n) = o(\log n)$), and, for the proof of the Main Theorem,
$f$ must not grow too slowly ($\log n = o(f(n))$).

\begin{lemma}\label{prime_cyc}
For almost all permutations $\sigma$, there exists a prime $p$
with $p > f(n)$, such that $\sigma$ has a cycle of length $p$.
\end{lemma}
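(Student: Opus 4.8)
The plan is to estimate the proportion of permutations that \emph{fail} the conclusion and show it tends to $0$. Fix $n$ large, and let $P = P(n)$ be the set of primes $p$ with $f(n) < p \le n$; note $P \subseteq \{1,\dots,n\}$. A permutation $\sigma \in S_n$ has no cycle whose length is a prime exceeding $f(n)$ precisely when none of its cycle lengths lies in $P$. So I would apply Theorem~\ref{mu_bnd} with $C = P$: the proportion of such $\sigma$ is less than $\exp(\gamma - \mu)$, where $\mu = \sum_{p \in P} 1/p$. Everything then reduces to showing $\mu \to \infty$ as $n \to \infty$.

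For that I would invoke Mertens' theorem, $\sum_{p \le x} 1/p = \log\log x + M + o(1)$ for a constant $M$, which gives
$$\mu \s=\s \sum_{f(n) < p \le n} \frac{1}{p} \s=\s \log\log n - \log\log f(n) + o(1).$$
Since $\log f(n) = o(\log n)$ by the hypothesis on $f$, we have $\log\bigl(\log f(n)/\log n\bigr) \to -\infty$, that is, $\log\log f(n) - \log\log n \to -\infty$, so $\mu \to \infty$. Concretely, with $f(n) = (\log n)^2$ one gets $\log\log f(n) = \log 2 + \log\log\log n$, hence $\mu = \log\log n - \log\log\log n - \log 2 + o(1)$. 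Either way $\exp(\gamma - \mu) \to 0$, so the proportion of permutations in $S_n$ having no cycle of prime length greater than $f(n)$ tends to $0$, which is the assertion.

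The argument is short and I do not anticipate a genuine obstacle: the only input beyond Theorem~\ref{mu_bnd} is the Mertens estimate, and the remainder is double-logarithm bookkeeping. The points worth checking are that $P$ is nonempty (automatic once $f(n) < n$, which holds for $n > 10$, and in any case implied by $\mu \to \infty$), and that the hypothesis $\log f(n) = o(\log n)$ is used exactly to force $\mu \to \infty$; the opposite growth condition $\log n = o(f(n))$ plays no role here and is reserved for the Main Theorem. The real difficulty of the paper lies downstream, in deducing the title statement from this lemma.
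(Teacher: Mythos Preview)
Your argument is correct and matches the paper's proof essentially line for line: apply Theorem~\ref{mu_bnd} with $C$ the primes in $(f(n),n]$, use Mertens' theorem to evaluate $\mu = \log\log n - \log\log\log n - \log 2 + o(1)$, and conclude that the failure proportion is $\exp(\gamma-\mu) = O(\log\log n/\log n)\to 0$. Your remarks on which growth hypothesis on $f$ is used here versus in the Main Theorem are also exactly right.
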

\begin{proof}
A theorem of Mertens \cite[Theorem~427]{HW} tells us that there exists a
constant $M$ so that
\begin{equation}\label{mertens}
\sum_{p\le x}\frac{1}{p} \s=\s \log\log x + M + o(1)
\end{equation}
as $x\to\infty$. Now considering primes beteween $f(n)$ and $n$ we have
\begin{eqnarray}
\sum_{f(n)<p\le n} \frac{1}{p}
    & = & \log\log n - \log\log f(n) + o(1) \nonumber\\
    & = & \log\log n - \log\log\log n - \log 2 + o(1). \label{loglog} 
\end{eqnarray}
The result follows from Theorem~\ref{mu_bnd} with $C$ equal
to the set of primes in the interval $(f(n), n\,]$, which shows that the
proportion of elements in $S_n$ not having a cycle of the required prime
length is $O(\log\log n/\log n)$, so goes to zero as $n\to\infty$.
\end{proof}

The proof of the following theorem shows that for almost all permutations, a 
sufficiently long prime length cycle, as in Lemma~\ref{prime_cyc}, is in
general the only cycle with length a multiple of the prime.
\begin{theorem}
\label{main}
For almost all permutations $\sigma$, there is some power of $\sigma$ that
is a cycle of prime length.
\end{theorem}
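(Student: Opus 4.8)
The plan is to upgrade Lemma~\ref{prime_cyc} by a short first-moment argument. As observed in the introduction, $\sigma$ has a power that is a cycle of prime length if and only if $\sigma$ has a cycle whose length is a prime $p$ that divides no other cycle length of $\sigma$: indeed, if $p$ has this property then $\sigma^m$ is a $p$-cycle, where $m$ is the least common multiple of the lengths of the remaining cycles (note $\gcd(m,p)=1$ since $p$ is prime). Call $\sigma$ \emph{good} if it has such a prime $p$ with $p > f(n)$. Since every good $\sigma$ satisfies the conclusion, it suffices to prove that the proportion of $\sigma \in S_n$ that fail to be good tends to $0$.

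By Lemma~\ref{prime_cyc} the proportion of $\sigma$ having no prime cycle of length exceeding $f(n)$ tends to $0$, so I may restrict attention to those $\sigma$ that do have such a cycle, of length $p_0$ say. If such a $\sigma$ is not good, then $p_0$, being a prime exceeding $f(n)$ that occurs as a cycle length, must divide the length of some \emph{other} cycle of $\sigma$, and that cycle is either a second cycle of length $p_0$ or a cycle of length $kp_0$ for some integer $k \ge 2$. Writing $\mathrm{I}_p$ for the event that $\sigma$ has at least two cycles of length $p$, and $\mathrm{II}_p$ for the event that $\sigma$ has a cycle of length $p$ together with a cycle of length $kp$ for some integer $k \ge 2$, it follows that, apart from the exceptional set of Lemma~\ref{prime_cyc}, the non-good $\sigma$ all lie in $\bigcup_{f(n) < p \le n}(\mathrm{I}_p \cup \mathrm{II}_p)$. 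It therefore suffices to bound $\sum_{f(n) < p \le n}\bigl(\Pr(\mathrm{I}_p) + \Pr(\mathrm{II}_p)\bigr)$.

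These probabilities follow from elementary counting: in $S_n$ the expected number of $\ell$-cycles is $1/\ell$, the expected number of unordered pairs of distinct $\ell$-cycles is $1/(2\ell^2)$, and for $\ell_1 \ne \ell_2$ the expected number of pairs consisting of an $\ell_1$-cycle and an $\ell_2$-cycle is $1/(\ell_1\ell_2)$ (each obtained by choosing the point sets and cyclic orders and permuting the remaining points). Markov's inequality then gives $\Pr(\mathrm{I}_p) \le 1/(2p^2)$ and, by a union bound over $k$, $\Pr(\mathrm{II}_p) \le \sum_{2 \le k \le n/p} 1/(kp^2) \le (\log n)/p^2$. Hence
$$\sum_{f(n) < p \le n}\bigl(\Pr(\mathrm{I}_p) + \Pr(\mathrm{II}_p)\bigr)\;\le\;(1 + \log n)\sum_{p > f(n)}\frac{1}{p^2}\;=\;O\!\left(\frac{\log n}{f(n)}\right)\;=\;O\!\left(\frac{1}{\log n}\right),$$
using $f(n) = (\log n)^2$. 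Combined with Lemma~\ref{prime_cyc}, the proportion of non-good $\sigma$ is $O(\log\log n / \log n)$, which tends to $0$, proving the theorem.

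The only delicate points are bookkeeping in the second paragraph — checking that ``$p_0$ divides some other cycle length'' splits exactly into the two events $\mathrm{I}_{p_0}$ and $\mathrm{II}_{p_0}$, and noting that only \emph{upper} bounds on these probabilities are needed, so that no independence or Poisson-approximation machinery is required — and verifying that the hypothesis $\log n = o(f(n))$ on the auxiliary function is precisely what makes the final sum vanish. Since even a much weaker bound than $(\log n)/p^2$ for $\Pr(\mathrm{II}_p)$ would suffice, there is considerable latitude in the choice of $f$, as remarked before Lemma~\ref{prime_cyc}.
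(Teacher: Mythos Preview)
Your proof is correct and follows essentially the same route as the paper: the paper's set $U_{n,p}$ is exactly $\mathrm{I}_p\cup\mathrm{II}_p$, its structure-counting bound (\ref{n_structs})--(\ref{Unp_bnd}) is your Markov/first-moment bound, and the sum over primes $p>f(n)$ is handled identically to give $O(1/\log n)$, after which $V_n=T_n\setminus U_n$ plays the role of your ``good'' permutations. The only cosmetic difference is that you separate the case $k=1$ (a second $p$-cycle) from $k\ge 2$, whereas the paper treats all $k\ge 1$ at once; your explicit $O(\log\log n/\log n)$ for the total exceptional set is stated in the paper as a separate sharpening (Theorem~3).
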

\begin{proof}
We continue to use $f(n) = (\log n)^2$, and $n>10$ so $f(n) > 5$.

Let $T_n \subseteq S_n$ be the set of permutations $\sigma$ such that there
exists a prime $p$, with $p > f(n)$, such that $\sigma$ has a cycle of
length $p$.  Lemma~\ref{prime_cyc} implies
\begin{equation}\label{t_lim}
\lim_{n\to\infty} \frac{\left|T_n\right|}{n!} = 1.
\end{equation}

Fix a prime $p > f(n)$, and consider $U_{n,p}\subseteq S_n$
consisting of the permutations that have a cycle of length $p$, and have
another cycle of length divisible by $p$.
Then $U_{n,p}\subseteq T_n$, and note that $p>n/2$ implies $U_{n,p}$ is
empty.

We find an upper bound for $\left|U_{n,p}\right|$ by
counting all structures consisting of a permutation from $S_n$ having a first
distinguished cycle of length $p$, and a second distinguished cycle with
length divisible by $p$.
This will be an upper bound for $\left|U_{n,p}\right|$ as every element of
$U_{n,p}$ gives rise to such a structure, some giving more than one structure
by distinguishing different cycles.
In the following, the right hand side of (\ref{n_structs}) counts the
structures mentioned, where the length of the second distinguished cycle
is $kp$, and $p>5$ so $\log p > 1$.
\begin{eqnarray}
\left|U_{n,p}\right| & \le &
    (p-1)!{n \choose p} \sum_{k=1}^{\lfloor n/p\rfloor-1}
	{n-p \choose kp}(kp-1)!\,(n-p-kp)! \label{n_structs}\\
 & = & \frac{n!}{p^2} \sum_{k=1}^{\lfloor n/p\rfloor-1} \frac{1}{k}.\nonumber\\
\frac{\left|U_{n,p}\right|}{n!} & \le &
 \frac{1}{p^2} \sum_{k=1}^{\lfloor n/p\rfloor-1} \frac{1}{k} 
 \s<\s \frac{1}{p^2}\left(1 + \log\frac{n}{p}\right) \s<\s
      \frac{\log n}{p^2}.\label{Unp_bnd}
\end{eqnarray}

Now we bound the following sum over primes greater than $x>1$:
\begin{equation} \label{p2_bnd}
\sum_{p>x}\frac{1}{p^2}\s<\s\sum_{k=\lceil x\rceil}^\infty\frac{1}{k^2}%
\s<\s\frac{1}{x^2}+\int_x^\infty\frac{dt}{t^2}%
\s=\s\frac{1}{x^2}+\frac{1}{x}\s<\s\frac{2}{x}.
\end{equation}
It can be shown that $\sum_{p>x}1/p^2$ is asymptotic to $1/(x\log x)$
as $x\to\infty$, so this bound is far from best possible,
but it is easy to derive and is enough for what follows.

Let $U_n = \bigcup_{p > f(n)} U_{n,p}$. 
We have $U_n\subseteq T_n$ and, using (\ref{Unp_bnd}) and (\ref{p2_bnd}),
\begin{equation} \label{u_bnd}
\frac{\left|U_n\right|}{n!} \s\le\s
	\sum_{p>f(n)} \frac{\left|U_{n,p}\right|}{n!}
 \s<\s \sum_{p>f(n)} \frac{\log n}{p^2} 
 \s<\s \frac{2\log n}{f(n)}
 \s=\s \frac{2}{\log n}.
\end{equation}
We deduce from (\ref{u_bnd}) that
\begin{equation}\label{u_lim}
\lim_{n\to\infty} \frac{\left|U_n\right|}{n!} = 0.
\end{equation}

Put $V_n = T_n \setminus U_n$. Let $\sigma\in T_n$, so $\sigma$ has a cycle of
prime length $p$ with $p>f(n)$. If no power of $\sigma$ is a $p$-cycle
then $\sigma$ has some other cycle with length divisible by $p$, so
$\sigma\in U_n$. It follows that every element of $V_n$ has the property
that some power is a cycle of prime length.
Furthermore, from (\ref{t_lim}) and (\ref{u_lim}),
$$
\lim_{n\to\infty} \frac{|V_n|}{n!} =
\lim_{n\to\infty} \frac{|T_n|-|U_n|}{n!} = 1.
$$
\end{proof}

Relating this to C. Jordan's result, we have:
\begin{corollary}\label{cor1}
Almost all permutations have a power that is a $p$-cycle with
$p$ prime and $p\le n-3$.
\end{corollary}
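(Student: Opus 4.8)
The plan is to derive this from Theorem~\ref{main} together with the elementary fact that almost all permutations have no long cycle. First I would revisit the proof of Theorem~\ref{main}: what is actually shown there is that the set $V_n = T_n\setminus U_n$ has density tending to $1$ and has the refined property that for each $\sigma\in V_n$ there is a prime $p>f(n)$ which \emph{occurs as a cycle length of $\sigma$} and for which some power of $\sigma$ is a $p$-cycle. (Indeed, $\sigma\in T_n$ supplies such a $p$; if no power of $\sigma$ were a $p$-cycle then $\sigma^L$, where $L$ is the product of the lengths of the other cycles of $\sigma$ and is coprime to $p$ because $\sigma\notin U_{n,p}$, would itself be a $p$-cycle, a contradiction.) So it is enough to discard the elements of $V_n$ whose only such witnessing prime $p$ exceeds $n-3$, i.e. has $p\in\{n-2,n-1,n\}$.

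Next I would bound the density of $S_n\setminus X_n$, where $X_n$ is the set of permutations all of whose cycles have length at most $n-3$. The number of permutations of $S_n$ carrying a distinguished $k$-cycle is ${n\choose k}(k-1)!\,(n-k)! = n!/k$, so the proportion of $\sigma\in S_n$ with at least one cycle of length $k$ is at most $1/k$. Taking $k = n-2,\,n-1,\,n$ and applying a union bound gives
\[
\frac{|S_n\setminus X_n|}{n!}\;\le\;\frac{1}{n-2}+\frac{1}{n-1}+\frac{1}{n}\;\to\;0 ,
\]
hence $|X_n|/n!\to1$, and therefore $|V_n\cap X_n|/n!\to1$ by (\ref{t_lim}) and (\ref{u_lim}). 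For any $\sigma\in V_n\cap X_n$ the witnessing prime $p$ is a cycle length of $\sigma$, so $p\le n-3$, while some power of $\sigma$ is a $p$-cycle; this is exactly the statement of the corollary.

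There is no real obstacle here; the argument is pure bookkeeping on top of Theorem~\ref{main}. The one point that needs a moment's attention is the observation that the witnessing prime, being an actual cycle length, is automatically bounded by the largest cycle length of $\sigma$, so cutting it below $n-3$ costs only the few permutations with a cycle of length in $\{n-2,n-1,n\}$. A cosmetically different route, avoiding even this, is to run the proof of Theorem~\ref{main} unchanged but with $T_n$ defined using the primes in $(f(n),\,n-3\,]$: estimate (\ref{loglog}) is affected only by $o(1)$ since at most three terms are dropped, and the bound (\ref{u_bnd}) can only improve.
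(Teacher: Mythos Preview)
Your proof is correct and follows essentially the same approach as the paper: intersect the density-one set produced by Theorem~\ref{main} with the density-one set of permutations having no cycle of length $n$, $n-1$, or $n-2$. The paper's version is slightly leaner because it uses Theorem~\ref{main} as a black box---since every cycle length of $\sigma^k$ divides some cycle length of $\sigma$, the prime $p$ is automatically $\le n-3$ once all cycles of $\sigma$ are---so there is no need to revisit the internal structure of $V_n$.
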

\begin{proof}
The proportion of elements in $S_n$ having a cycle of length $n$, $n-1$,
or $n-2$ is $O(1/n)$.
So almost all permutations have no cycle of length $> n-3$, thus almost
all permutations power to a prime length cycle with cycle length $\le n-3$.
\end{proof}

The following shows that the results above also hold for almost all even
permutations, so apply to algorithms recognising the alternating group as
well as the symmetric group.
\begin{corollary}\label{cor2}
Let $q_n$ be the proportion of elements of the alternating group $A_n$ that
have a power that is a $p$-cycle with $p$ prime and $p\le n-3$. Then
$\lim_{n\to\infty} q_n = 1$.
\end{corollary}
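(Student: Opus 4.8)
The plan is to obtain Corollary~\ref{cor2} as an immediate consequence of Corollary~\ref{cor1}, using only that $A_n$ is a subgroup of $S_n$ of the fixed index $2$, so that a family of sets which is negligible in $S_n$ stays negligible in $A_n$. Observe first that ``having a power that is a $p$-cycle with $p$ prime and $p\le n-3$'' is a property of the permutation itself and does not refer to the ambient group, so it is meaningful to ask it of elements of $A_n$; and since every prime cycle produced along the way in the proofs of Theorem~\ref{main} and Corollary~\ref{cor1} has length $p>f(n)>5$, hence is an odd-length cycle and so an even permutation, there is no parity reason for $A_n$ to be poorly represented among the permutations with this property.

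Concretely, let $E_n\subseteq S_n$ be the set of $\sigma$ possessing a power that is a $p$-cycle with $p$ prime and $p\le n-3$. Corollary~\ref{cor1} states exactly that $|S_n\setminus E_n|/n!\to 0$. Because $A_n\subseteq S_n$ we have $A_n\setminus E_n\subseteq S_n\setminus E_n$, and because $|A_n|=n!/2$ it follows that
$$ 1-q_n \s=\s \frac{|A_n\setminus E_n|}{|A_n|} \s\le\s \frac{|S_n\setminus E_n|}{n!/2} \s=\s \frac{2\,|S_n\setminus E_n|}{n!} \s\longrightarrow\s 0 . $$
Together with the trivial bound $q_n\le 1$ this gives $\lim_{n\to\infty}q_n=1$.

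I do not expect a genuine obstacle: the entire argument is the factor-of-two comparison above, which works because the index $[S_n:A_n]=2$ is bounded independently of $n$. The only point requiring a moment's thought is that the property under discussion is intrinsic to a permutation (and not at odds with evenness), and that is clear from the remarks above. An alternative, fully self-contained route would be to repeat the estimates in the proof of Theorem~\ref{main} with $S_n$ replaced by $A_n$ throughout, noting that each proportion bounded there is at most doubled when one restricts to $A_n$; but the reduction to Corollary~\ref{cor1} is shorter and already does the job.
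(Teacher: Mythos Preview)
Your argument is correct and is essentially the paper's own proof: with $r_n=|E_n|/n!$, the paper writes the same index-$2$ comparison $0\le 1-q_n\le 2(1-r_n)$ and invokes Corollary~\ref{cor1}. The parity remarks you add are harmless but not needed, since the property is intrinsic to $\sigma$ and the comparison $A_n\setminus E_n\subseteq S_n\setminus E_n$ already suffices.
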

\begin{proof}
Let $r_n$ be the proportion of elements of $S_n$ that have the given property.
By Corollary~\ref{cor1}, $\lim_{n\to\infty} r_n = 1$. As $A_n$ has index 2 
in $S_n$, we have $0\le 1-q_n \le 2(1-r_n)$, and the result follows by taking
the limit $n\to\infty$.
\end{proof}

Finally, using the results above we may prove a sharper version
of the main theorem.
\begin{theorem}
The proportion of elements in $S_n$ that do not power to a cycle of prime
length is at most $O(\log\log n/\log n)$.
\end{theorem}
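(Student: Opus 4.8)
The plan is to return to the proof of Theorem~\ref{main} and keep track of the sizes of the two exceptional sets appearing there, rather than only recording that they are $o(1)$. Recall from that proof that a permutation $\sigma\in S_n$ fails to have a power that is a cycle of prime length only if either $\sigma\notin T_n$ (it has no cycle whose length is a prime exceeding $f(n)$) or $\sigma\in U_n$ (it has such a prime cycle, but also another cycle whose length is a multiple of that prime). Hence the set of permutations not powering to a prime length cycle is contained in $(S_n\setminus T_n)\cup U_n$, and it suffices to bound the proportions of these two sets.

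For $S_n\setminus T_n$, I would reuse the proof of Lemma~\ref{prime_cyc}: with $C$ the set of primes in $(f(n),n]$ and $\mu=\sum_{k\in C}1/k$, Theorem~\ref{mu_bnd} bounds this proportion by $\exp(\gamma-\mu)$, and the Mertens estimate (\ref{loglog}) gives $\mu=\log\log n-\log\log\log n-\log 2+o(1)$ (using $f(n)=(\log n)^2$). Substituting shows $\exp(\gamma-\mu)$ is asymptotic to $2e^\gamma\log\log n/\log n$, so the proportion of $S_n\setminus T_n$ is $O(\log\log n/\log n)$.

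For $U_n$ no new work is needed: inequality (\ref{u_bnd}) already gives $|U_n|/n!<2/\log n=O(1/\log n)$, which is absorbed into $O(\log\log n/\log n)$. Adding the two estimates then finishes the proof.

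I do not expect a genuine obstacle here beyond bookkeeping: the one point requiring care is spelling out the constant in the Mertens estimate so that the $\exp(\gamma-\mu)$ term is seen to be genuinely $O(\log\log n/\log n)$ and not merely $o(1)$ — this is exactly the quantitative content already present in the proof of Lemma~\ref{prime_cyc}, which is simply not emphasised there. One should also note in passing that the $O(1/\log n)$ contribution from $U_n$ is dominated by the $O(\log\log n/\log n)$ contribution from $S_n\setminus T_n$, so the final bound has the stated form.
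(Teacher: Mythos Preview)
Your proposal is correct and follows essentially the same approach as the paper: the paper also bounds $|S_n\setminus T_n|/n!$ by $O(\log\log n/\log n)$ via Theorem~\ref{mu_bnd} and (\ref{loglog}), bounds $|U_n|/n!$ by $O(1/\log n)$ via (\ref{u_bnd}), and combines these to bound $|S_n\setminus V_n|/n!$. Your set $(S_n\setminus T_n)\cup U_n$ is exactly $S_n\setminus V_n$ since $U_n\subseteq T_n$, so the arguments are identical.
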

\begin{proof}
We use notation from the proof of Theorem~\ref{main}. It follows from
Theorem~\ref{mu_bnd} and equation~(\ref{loglog}) that 
$$\frac{|S_n\setminus T_n|}{n!} = O\left(\frac{\log\log n}{\log n}\right).$$
The proof of Theorem~\ref{main} gives
$$\frac{|U_n|}{n!} = O\left(\frac{1}{\log n}\right) =
o\left(\frac{\log\log n}{\log n}\right).$$
Thus we find
$$\frac{|S_n\setminus V_n|}{n!} = O\left(\frac{\log\log n}{\log n}\right).$$
Since all elements of $V_n$ power to a prime cycle this proves the result.
\end{proof}

%
%

\bibliographystyle{plain}
\bibliography{is_altsym}

\end{document}